\newtheorem{theorem}{Theorem}[section]
\theoremstyle{remark}
\newtheorem{remark}{Remark}
\definecolor{darkgreen}{rgb}{0.0, 0.55, 0.0}
\title{Stochastic Production Planning: Optimal Control and Analytical Insights}
\author{Dragos-Patru Covei\thanks{Department of Applied Mathematics, The Bucharest University of Economic Studies, Piata Romana, No. 6, Bucharest, District 1, 010374, Romania}}
\date{}
\begin{document}
%%%%%%%%%%%%%%%%%%%%%%%%%%%%%%%%%%%%%
\maketitle
	
\begin{abstract}
\noindent
This study investigates a stochastic production planning problem with a
running cost composed of quadratic production costs and inventory-dependent
costs. The objective is to minimize the expected cost until production stops
when inventory reaches a specified level, subject to a boundary condition.
Using probability space and Brownian motion, the Hamilton-Jacobi-Bellman
(HJB) equation is derived, and optimal feedback control is obtained. The
solution demonstrates desirable monotonicity and convexity properties under
specific assumptions. An illustrative example further confirms these results
with explicit function properties and a practical application.

\medskip\noindent
\textbf{AMS subject classification}: 49K20; 49K30; 90C31;90C31;90B30; 90C31; 90B30.
		
\medskip\noindent
\textbf{Keywords}: Stochastic production planning, Hamilton--Jacobi--Bellman equation, optimal control, Brownian motion, radial symmetry.
\end{abstract}

\section{Introduction}

Stochastic production planning is a central topic in operations research,
driven by the need to design production policies under uncertainty while
balancing costs and inventory constraints. Traditional models often employ
simplified quadratic cost structures and linear state dynamics; however,
modern production environments necessitate more versatile frameworks that
can accurately capture the interplay between production decisions and
inventory management.

The collection of articles listed here offers diverse insights into
stochastic production planning and related optimization problems.

Bensoussan et al. \cite{BS} laid the groundwork for studying stochastic
production planning under constraints, providing the foundation for many
advancements in the field. Their work is complemented by the research of
Fleming et al. \cite{FSS}, which introduces methods for addressing randomly
fluctuating demand within stochastic frameworks, showcasing the adaptability
of planning models under varying economic conditions.

Canepa, Covei, and Pirvu \cite{CCP2} bring fixed-point theory into the realm
of stochastic production planning, offering elegant mathematical
formulations for optimal production rates. This approach dovetails with
Covei's analysis \cite{CDP} of symmetric solutions to elliptic partial
differential equations, revealing deeper connections between mathematical
structures and practical applications in production models.

The exploration of regime switching in production systems is advanced by
Canepa et al. \cite{CCP} and Cadenillas et al. \cite{CS}, who investigate
the challenges and solutions inherent to multi-regime production management.
These works emphasize dynamic programming and regime-dependent strategies to
optimize production processes.

Gharbi and Kenne \cite{GK} extend the scope of stochastic production
problems to multi-product, multi-machine systems, which require intricate
control strategies to address complexity. Their work complements the
foundational studies of Thompson and Sethi \cite{ST}-\cite{TS}, who analyze
long-term efficiency and optimal control strategies within production
systems, grounded in turnpike theory.

Leonori \cite{Leonori} and Porretta \cite{Porretta} contribute significant
advances in the study of elliptic equations, providing insights that
resonate with the mathematical modeling of production planning problems.
Their work underscores the importance of mathematical rigor in addressing
challenges without conditions at infinity.

Collectively, these articles build a rich tapestry of knowledge that
addresses theoretical and practical aspects of stochastic production
planning, offering invaluable tools for tackling real-world challenges.

In our present work, we consider a generalized framework where the running
cost comprises two distinct components:

\begin{equation*}
f(|p(t)|,|y(t)|)=a(|p(t)|)+b(|y(t)|),
\end{equation*}%
with $a(|p|)=|p|^{2}$, and $b:\left[ 0,\infty \right) \rightarrow \left[
0,\infty \right) $ a continuous, non-decreasing function satisfying $%
0<b(x)\leq x^{2}$ for all $x>0$. This formulation offers additional
flexibility by separately modeling the cost of adjusting production rates
and maintaining inventory levels.

The system dynamics are modeled by the controlled stochastic differential
equations (SDEs)

\begin{equation}
dy_{i}(t)=p_{i}(t)\,dt+\sigma \,dw_{i}(t),\quad y_{i}(0)=y_{i}^{0},\quad
i=1,\dots ,N,  \label{da}
\end{equation}%
where $w(t)=(w_{1}(t),\dots ,w_{N}(t))$ is an $N$-dimensional Brownian
motion defined on a complete probability space $(\Omega ,\mathcal{F},\{%
\mathcal{F}_{t}\}_{0\leq t<\infty },P)$. Production is terminated at the
stopping time

\begin{equation}
\tau =\inf \{t>0:|y(t)|\geq R\},  \label{sta}
\end{equation}%
when the Euclidean norm of the inventory exceeds a given threshold $R$.

Our goal is to minimize the cost functional

\begin{equation*}
J(p)=E\int_{0}^{\tau }\Bigl[|p(t)|^{2}+b(|y(t)|)\Bigr]\,dt,
\end{equation*}%
subject to the dynamics (\ref{da}) and stopping time (\ref{sta}). By
applying the dynamic programming principle, we derive the associated
Hamilton--Jacobi--Bellman (HJB) equation

\begin{equation}
-\frac{\sigma ^{2}}{2}\Delta z(x)-b(|x|)=-\frac{1}{4}|\nabla z(x)|^{2},\quad
|x|<R,  \label{nhj}
\end{equation}%
with the boundary condition $z(x)=Z_{0}$,$\quad $for $|x|=R$, where the
value $Z_{0}$ is determined to ensure that the parameter $\alpha $ related
to $z(x)$ \ stays within the range $\left( 0,\infty \right) $ (see Remark~%
\ref{initv}). This boundary specification ensures the well-posedness of the
problem and is key in our analysis.

A significant contribution of our work is the change of variables that
transforms the nonlinear HJB equation (\ref{nhj}) into a more tractable
form. By setting

\begin{equation*}
z(x)=-v(x)\quad \text{and}\quad u(x)=e^{\frac{v(x)}{2\sigma ^{2}}},
\end{equation*}%
we obtain the representation $z(x)=-2\sigma ^{2}\ln u(x)$. Under natural
assumptions, we show that the function $z(x)$ is radially symmetric,
strictly nonincreasing, and concave with respect to the radial variable $%
r=|x|$. In particular, if we write $z(x)=h(r)$ then the gradient satisfies

\begin{equation*}
\nabla z(x)=h^{\prime }(r)\frac{x}{r},
\end{equation*}%
and the optimal control is given by the feedback law

\begin{equation*}
p^{\ast }(x)=-\frac{1}{2}\nabla z(x)=-\frac{1}{2}h^{\prime }(r)\frac{x}{r}.
\end{equation*}%
Since $h^{\prime }(r)\leq 0$ and is (strictly) nonincreasing due to the
concavity of $z$, it follows that the magnitude

\begin{equation*}
|p^{\ast }(x)|=-\frac{1}{2}h^{\prime }(r)
\end{equation*}%
is (strictly) nondecreasing in the radial distance $r$. These properties
provide valuable economic insights: as the magnitude of the inventory
increases, the optimal production rate is adjusted upward in a controlled
and predictable manner.

To further substantiate our theoretical findings, we complement our analysis
with numerical experiments. Using an Euler-type discretization, we simulate
the inventory evolution and optimal production policies. The numerical
results confirm that the inventory remains bounded by the threshold $R$ up
to the stopping time $\tau $, and they reveal the monotonic and convex
nature of the feedback control derived from the HJB equation.

In summary, our paper enhances the findings of \cite{CCP2} by presenting the
following contributions:

\begin{itemize}
\item We formulate a generalized stochastic production planning problem that
allows for separate treatment of production and inventory costs.

\item We derive the corresponding HJB equation and transform it via a
logarithmic change of variables leading to a tractable formulation.

\item We establish the existence, uniqueness, and structural properties
(radial symmetry, monotonicity, and concavity) of the value function.

\item We demonstrate that the optimal feedback control is uniquely
determined and exhibits economically desirable properties.

\item We validate our theoretical results with comprehensive numerical
experiments.
\end{itemize}

The remainder of the paper is organized as follows. In Section \ref{2} we
introduce the model and discuss the production and inventory dynamics.
Section \ref{3} details the change of variables and derivation of the HJB
equation. In Section \ref{4} we explore the optimal control and its
structural properties, supported by rigorous proofs. Section \ref{5}
presents numerical experiments, including an illustrative real world example
(which contains the obtained graphs for the illustrative real world
example), while Section \ref{6} concludes with final remarks and prospects
for future research.

\section{The Model\label{2}}

Consider a manufacturing plant that produces $N$ types of goods. The
production rate at time $t$ is denoted by

\begin{equation*}
p(t)=(p_{1}(t),p_{2}(t),\dots ,p_{N}(t)),
\end{equation*}%
where each $p_{i}(t)$ represents the instantaneous production rate of good $%
i $. The inventory levels are modeled by the state variable

\begin{equation*}
y(t)=(y_{1}(t),y_{2}(t),\dots ,y_{N}(t)),
\end{equation*}%
which evolves according to the controlled stochastic differential equations
(SDEs) given by 
\begin{equation}
dy_{i}(t)=p_{i}(t)\,dt+\sigma \,dw_{i}(t),\quad y_{i}(0)=y_{i}^{0},\quad
i=1,\dots ,N,  \label{di}
\end{equation}%
where $\sigma >0$ is a constant representing the volatility of the
production process and $w(t)=(w_{1}(t),w_{2}(t),\dots ,w_{N}(t))$ denotes an 
$N$-dimensional standard Brownian motion defined on a complete probability
space with a naturel filtration $(\Omega ,\mathcal{F},\{\mathcal{F}%
_{t}\}_{0\leq t<\infty },P)$.

The objective in this production planning problem is to choose the control $%
p(t)$ in order to minimize the expected cost

\begin{equation*}
J(p)=E\int_{0}^{\tau }\Bigl[|p(t)|^{2}+b(|y(t)|)\Bigr]\,dt,
\end{equation*}%
subject to the dynamics (\ref{di}) and the stopping time 
\begin{equation}
\tau =\inf \{t>0:|y(t)|\geq R\},  \label{st}
\end{equation}%
which indicates that production is halted once the Euclidean norm (i.e., the
overall inventory level) reaches the threshold $R$.

The cost structure is decomposed into two components:

\begin{enumerate}
\item The term $|p(t)|^{2}$ penalizes rapid or large changes in the
production rates, reflecting, for example, the cost of adjusting machinery
or labor.

\item The function $b\colon \lbrack 0,\infty )\rightarrow \lbrack 0,\infty )$
models the holding (or inventory) cost. We assume that $b$ is continuous,
nondecreasing, and satisfies%
\begin{equation}
0<b(x)\leq x^{2}\quad \text{for all }x>0.  \label{bb}
\end{equation}
\end{enumerate}

In many illustrative cases, a quadratic inventory cost is chosen; that is,
one takes $b(|y|)=|y|^{2}$. Thus, the overall running cost becomes

\begin{equation*}
f(|p(t)|,|y(t)|)=|p(t)|^{2}+b(|y(t)|).
\end{equation*}%
The goal is to minimize $J(p)$ over all admissible controls, balancing the
cost of production adjustments with the cost incurred from holding inventory.

\textbf{Interpretation and Additional Explanation:} \ 

The stopping time $\tau$, defined in (\ref{st}), ensures that the process is
only considered until the inventory level reaches the prescribed threshold $%
R $. This represents a practical constraint in real-world production
planning, where operations must be curtailed once capacity limits or risk
thresholds are exceeded.

\textbf{Derivation and Associated HJB Equation:}

To solve the optimization problem, one typically turns to dynamic
programming. Defining the value function

\begin{equation*}
z(x)=\inf_{p}J(p),\quad \text{with }x=y(0),
\end{equation*}%
the Bellman principle leads to the Hamilton--Jacobi--Bellman (HJB) equation

\begin{equation*}
-\frac{\sigma ^{2}}{2}\Delta z(x)-b(|x|)=-\frac{1}{4}|\nabla z(x)|^{2},\quad
|x|<R,
\end{equation*}%
subject to the boundary condition

\begin{equation*}
z(x)=Z_{0},\quad |x|=R,
\end{equation*}%
where $Z_{0}$ is determinen in accordance with economic or operational
considerations. In particular, $Z_{0}$ satisfy a condition, ensuring that,
under the change of variable $u(x)=e^{-\frac{z(x)}{2\sigma ^{2}}}$, the new
variable $u(x)$ remains strictly positive.

Finally, observe that under radial symmetry (i.e., when $z(x)=h(r)$ with $%
r=|x|$), the gradient simplifies to

\begin{equation*}
\nabla z(x)=h^{\prime }(r)\frac{x}{r},
\end{equation*}%
and hence the optimal control is given by the feedback law

\begin{equation*}
p^{\ast }(x)=-\frac{1}{2}\nabla z(x)=-\frac{1}{2}h^{\prime }(r)\frac{x}{r}.
\end{equation*}%
Because $h^{\prime }(r)\leq 0$, the magnitude $|p^{\ast }(x)|=-\frac{1}{2}%
h^{\prime }(r)$ is nondecreasing in $r$. This structural property is
crucial, as it implies that the optimal production rate increases (or
remains constant) as inventory builds up, consistent with intuition in many
production scenarios.

This model thus forms the backbone of our analysis and forms the basis for
both our theoretical results (detailed in subsequent sections) and numerical
experiments (see Sections \ref{5} and \ref{6}). In the next sections, we
further develop the change of variables, introduce the analytical properties
of the resulting PDE, and detail the corresponding optimal control policy.

\section{Methodology\label{3}}

In this section we detail the steps and techniques used to derive the
optimal production planning policy and analyze the associated value
function. Our methodology combines dynamic programming, the derivation of a
Hamilton--Jacobi--Bellman (HJB) equation, a change of variables to obtain a
tractable form, and verification via properties of stochastic processes. We
also discuss numerical approximation methods.

\subsection{Derivation of the Hamilton--Jacobi--Bellman Equation}

Recall that the production planning problem involves minimizing the cost
functional

\begin{equation*}
J(p)=E\int_{0}^{\tau }\Bigl[|p(t)|^{2}+b(|y(t)|)\Bigr]\,dt,
\end{equation*}%
subject to the controlled dynamics

\begin{equation*}
dy_{i}(t)=p_{i}(t)\,dt+\sigma \,dw_{i}(t),\quad y_{i}(0)=y_{i}^{0},\quad
i=1,\dots ,N,
\end{equation*}%
and the stopping time

\begin{equation*}
\tau =\inf \{t>0:|y(t)|\geq R\}.
\end{equation*}%
Defining the value function $z(y(0))=\inf_{p}J(p)$, dynamic programming
yields the Bellman equation. By considering infinitesimal increments and
applying the dynamic programming principle, one obtains, for $|x|<R$, the
HJB equation

\begin{equation*}
\inf_{p\in \mathbb{R}^{N}}\left\{ p\cdot \nabla z(x)+\frac{\sigma ^{2}}{2}%
\Delta z(x)+|p|^{2}+b(|x|)\right\} =0.
\end{equation*}%
A standard minimization in $p$ is performed by differentiating the expression

\begin{equation*}
\varphi (p)=p\cdot \nabla z(x)+|p|^{2},
\end{equation*}%
with respect to $p$, which leads to the first order condition

\begin{equation*}
\nabla z(x)+2p=0.
\end{equation*}%
Thus, the optimal control is given by

\begin{equation*}
p^{\ast }(x)=-\frac{1}{2}\nabla z(x).
\end{equation*}%
Substituting back into the HJB formulation, we obtain the reduced equation: 
\begin{equation}
-\frac{\sigma ^{2}}{2}\Delta z(x)-b(|x|)=-\frac{1}{4}|\nabla z(x)|^{2},\quad
|x|<R,  \label{HJB}
\end{equation}%
with the boundary condition

\begin{equation*}
z(x)=Z_{0},\quad |x|=R.
\end{equation*}%
The choice of $\alpha \in \left( 0,\infty \right) $ must satisfy 
\begin{equation}
e^{-\frac{Z_{0}}{2\sigma ^{2}}}>\alpha ,  \label{initv}
\end{equation}%
which ensures that when we later change variables the ensuing functions
remain well defined (see Remark~\ref{initv}).

\subsection{Change of Variables and Equivalent Formulation}

In order to transform the nonlinear HJB equation (\ref{HJB}) into a more
tractable formulation, we perform a change of variables. Define

\begin{equation*}
z(x)=-v(x)\quad \text{so that }v(x)=-z(x),
\end{equation*}%
and introduce the new variable

\begin{equation*}
u(x)=e^{\frac{v(x)}{2\sigma ^{2}}}.
\end{equation*}%
It follows that

\begin{equation*}
z(x)=-2\sigma ^{2}\ln u(x).
\end{equation*}%
By applying the chain rule, one obtains

\begin{equation*}
\nabla u(x)=\frac{u(x)}{2\sigma ^{2}}\nabla v(x)\quad \text{and}\quad \Delta
u(x)=\frac{u(x)}{2\sigma ^{2}}\Delta v(x)+\frac{u(x)}{(2\sigma ^{2})^{2}}%
|\nabla v(x)|^{2}.
\end{equation*}%
Since $|\nabla z(x)|=|\nabla v(x)|$, the HJB equation in terms of $v$
becomes:

\begin{equation*}
\frac{\sigma ^{2}}{2}\Delta v(x)-b(|x|)=-\frac{1}{4}|\nabla v(x)|^{2}.
\end{equation*}%
This change of variable is particularly advantageous in the special case
when the control cost is quadratic. Rearranging the derivative expressions
for $u(x)$, the equation can be written in the linear-like form

\begin{equation}
\Delta u(x)=\frac{1}{\sigma ^{4}}b(|x|)u(x),\quad |x|<R,  \label{lin}
\end{equation}%
with the boundary condition

\begin{equation}
u(x)=e^{-\frac{Z_{0}}{2\sigma ^{2}}},\quad |x|=R.  \label{linb}
\end{equation}

\subsection{Verification via a Stochastic Process}

To justify the optimal control obtained from (\ref{HJB}), we consider a
candidate value function $U:\mathbb{R}^N\to\mathbb{R}$ (for instance, one
might choose $U(y)=z(y)$) and define the process

\begin{equation*}
M^{p}(t)=U(y(t))-\int_{0}^{t}\!\Bigl[|p(s)|^{2}+b(|y(s)|)\Bigr]\,ds.
\end{equation*}%
Applying It\^{o}'s lemma to $U(y(t))$ under the dynamics

\begin{equation*}
dy(t)=p(t)\,dt+\sigma \,dw(t),
\end{equation*}%
yields

\begin{equation*}
dU(y(t))=\left[ \frac{\sigma ^{2}}{2}\Delta U(y(t))+p(t)\cdot \nabla U(y(t))%
\right] dt+\sigma \,\nabla U(y(t))\cdot dw(t).
\end{equation*}%
Thus, the differential of $M^{p}(t)$ can be written as

\begin{equation*}
dM^{p}(t)=\left\{ \frac{\sigma ^{2}}{2}\Delta U(y(t))+p(t)\cdot \nabla
U(y(t))-\left[ |p(t)|^{2}+b(|y(t)|)\right] \right\} dt+\sigma \,\nabla
U(y(t))\cdot dw(t).
\end{equation*}%
If we can choose $U$ such that

\begin{equation*}
\frac{\sigma ^{2}}{2}\Delta U(x)+p\cdot \nabla U(x)-\Bigl[|p|^{2}+b(|x|)%
\Bigr]\leq 0,\quad \forall \,x\in B_{R}(0),\ \forall \,p\in \mathbb{R}^{N},
\end{equation*}%
then $M^{p}(t)$ is a supermartingale for any admissible control $p$. In
particular, when the control is set equal to the optimal policy

\begin{equation*}
p^{\ast }(t)=-\frac{1}{2}\nabla z(y(t)),
\end{equation*}%
the drift term cancels (by virtue of the HJB equality) and $M^{p^{\ast }}(t)$
becomes a martingale. This argument provides a verification of the
optimality of the control law.

The next subsection details the existence and uniqueness of the radially
symmetric solution for the coupled problem (\ref{lin})-(\ref{linb}) and
further establishes the structural properties of the optimal control.

\subsection{Existence and Uniqueness of a Radially Symmetric Solution}

Many practical production planning models seek a radially symmetric solution
because of its simplicity and practical interpretability. In our framework,
we consider the boundary value problem for the function

\begin{equation*}
u:[0,R]\rightarrow (0,\infty ),
\end{equation*}%
where the function $u$ satisfies the following linear, second-order ordinary
differential equation (ODE) 
\begin{equation}
u^{\prime \prime }(r)+\frac{N-1}{r}\,u^{\prime }(r)=\frac{1}{\sigma ^{4}}%
\,b(r)\,u(r),\quad 0<r\leq R,  \label{odes}
\end{equation}%
subject to the initial conditions 
\begin{equation}
u(0)=\alpha \geq 0,\quad u^{\prime }(0)=0,  \label{odesbc}
\end{equation}%
and the boundary condition 
\begin{equation}
u(R)=e^{-\frac{Z_{0}}{2\sigma ^{2}}}>\alpha ,\text{ for some }Z_{0}.
\label{bcr}
\end{equation}%
The interpretation of these conditions is as follows:

\begin{itemize}
\item The initial condition $u(0)=\alpha >0$ ensures that the transformed
value function remains positive and well-defined. Moreover, the condition $%
u^{\prime }(0)=0$ is imposed to guarantee regularity at the origin, where a
singularity might otherwise arise due to the term $\frac{N-1}{r}$.

\item The boundary condition at $r=R$ is derived from the economic
considerations on the original value function $z(x)$ (through the change of
variable $u(x)=e^{-\frac{z(x)}{2\sigma ^{2}}}$) and must satisfy the
constraint%
\begin{equation*}
e^{-\frac{Z_{0}}{2\sigma ^{2}}}>\alpha ,
\end{equation*}%
as explained in Remark~\ref{initv}, though it may not prove essential in the
end, we will confirm.
\end{itemize}

We now state and prove our main result regarding the existence and
uniqueness of a solution to (\ref{odes})--(\ref{odesbc}).

\begin{theorem}
\label{1} Given $\alpha \in \left( 0,\infty \right) $ there exists a unique
positive radially symmetric solution%
\begin{equation*}
u_{\alpha }\in C^{2}([0,R])
\end{equation*}%
to the problem (\ref{odes}), subject to the initial conditions (\ref{odesbc}%
). Moreover, the solution $u_{\alpha }$ is convex and strictly increasing on 
$(0,R]$.
\end{theorem}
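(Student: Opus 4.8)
The plan is to treat the boundary value problem as an initial value problem: fix $\alpha>0$, prove existence and uniqueness of the solution $u_\alpha$ to \eqref{odes}--\eqref{odesbc} on $[0,R]$, and then read off convexity and monotonicity directly from the sign structure of the ODE. The one genuine subtlety is the coordinate singularity $\frac{N-1}{r}u'(r)$ at $r=0$, which prevents a naive application of Picard--Lindelöf at the origin; everything else is routine ODE theory plus a sign-chasing argument.

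First I would handle existence and uniqueness near $r=0$. The standard device is to rewrite \eqref{odes}--\eqref{odesbc} as the integral equation
\begin{equation*}
u(r)=\alpha+\frac{1}{\sigma^4}\int_0^r s^{1-N}\!\int_0^s t^{N-1}b(t)\,u(t)\,dt\,ds,
\end{equation*}
which automatically encodes both initial conditions and is nonsingular because $s^{1-N}\int_0^s t^{N-1}\,dt=s/N$ stays bounded. A contraction-mapping argument on $C([0,\delta])$ for small $\delta$, using continuity and local boundedness of $b$, gives a unique local $C^2$ solution; a continuation argument extends it to all of $[0,R]$ since the equation is linear in $u$, so no blow-up in finite $r$ can occur. (Alternatively one can cite a Picard--Lindelöf-type existence result for this classical singular ODE.)

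Next I would establish positivity, convexity, and monotonicity. Since $u(0)=\alpha>0$ and $u$ is continuous, $u>0$ on a maximal interval $[0,r_0)$; on that interval the right-hand side $\sigma^{-4}b(r)u(r)$ is $\ge0$ (and $>0$ for $r>0$, using $b>0$ on $(0,\infty)$). Writing \eqref{odes} as $(r^{N-1}u')'=\sigma^{-4}r^{N-1}b(r)u(r)$ and integrating from $0$ to $r$ gives $r^{N-1}u'(r)=\sigma^{-4}\int_0^r t^{N-1}b(t)u(t)\,dt>0$ for $r\in(0,r_0)$, hence $u'(r)>0$, so $u$ is strictly increasing and in particular stays $\ge\alpha>0$; this shows $r_0=R$ and positivity holds on all of $[0,R]$. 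Convexity then follows from \eqref{odes}: $u''(r)=\sigma^{-4}b(r)u(r)-\frac{N-1}{r}u'(r)$ is not obviously nonnegative pointwise, so instead I would argue convexity via $(r^{N-1}u')'\ge0$, which shows $r^{N-1}u'$ is nondecreasing; combined with $u'(0)=0$ and a short computation (or directly from the integral formula $u'(r)=\sigma^{-4}r^{1-N}\int_0^r t^{N-1}b(t)u(t)\,dt$, whose integrand is increasing once $u$ and $b$ are), one gets $u'$ nondecreasing, i.e.\ $u$ convex on $[0,R]$.

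The main obstacle is the singularity at the origin: one must be careful that $u'(0)=0$ is consistent and that the solution is genuinely $C^2$ up to $r=0$ (not merely $C^1$), which requires checking that $\lim_{r\to0^+}\frac{N-1}{r}u'(r)$ exists — it equals $\frac{N-1}{N}\sigma^{-4}b(0)\alpha$ by L'Hôpital applied to the integral formula — so that $u''(0)=\sigma^{-4}b(0)\alpha/\,$(after combining terms) is well defined. Once this regularity at $r=0$ is nailed down, the rest is a direct consequence of the nonnegativity of $b$ and the divergence-form structure of the equation. The boundary value \eqref{bcr} plays no role in existence/uniqueness here; it only pins down $Z_0$ a posteriori via $Z_0=-2\sigma^2\ln u_\alpha(R)$, consistent with the remark that the constraint $e^{-Z_0/2\sigma^2}>\alpha$ is automatic (indeed $u_\alpha(R)>u_\alpha(0)=\alpha$ by strict monotonicity).
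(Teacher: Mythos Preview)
Your overall strategy matches the paper's: recast \eqref{odes}--\eqref{odesbc} as the integral equation
\[
u(r)=\alpha+\frac{1}{\sigma^4}\int_0^r s^{1-N}\!\int_0^s t^{N-1}b(t)\,u(t)\,dt\,ds,
\]
get existence/uniqueness by a fixed-point argument near the origin plus linear continuation, and read off $u'>0$ from the divergence form $(r^{N-1}u')'=\sigma^{-4}r^{N-1}b(r)u(r)\ge0$. Your remarks on $C^2$ regularity at $r=0$ and on the r\^ole of \eqref{bcr} are also fine and agree with the paper's treatment.

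The gap is in the convexity step. You correctly flag that $u''(r)=\sigma^{-4}b(r)u(r)-\frac{N-1}{r}u'(r)$ is not manifestly nonnegative, but neither of your proposed workarounds actually closes the argument. Knowing that $r^{N-1}u'$ is nondecreasing (plus $u'(0)=0$) only gives $u'\ge0$, not that $u'$ itself is nondecreasing: if $r^{N-1}u'(r)$ increases, $u'(r)$ can still decrease since you are dividing by the growing factor $r^{N-1}$. Likewise, the integrand $t^{N-1}b(t)u(t)$ being increasing does \emph{not} force $r^{1-N}\int_0^r t^{N-1}b(t)u(t)\,dt$ to be increasing; differentiating gives $r^{-N}\bigl[r\,h(r)-(N-1)\int_0^r h\bigr]$ with $h(t)=t^{N-1}b(t)u(t)$, and for nondecreasing $h$ the bracket is only bounded below by $(2-N)r\,h(r)$, which is negative once $N\ge3$.

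The paper supplies the missing ingredient: use the monotonicity of $b$ and of $u$ to bound the integral the \emph{other} way, pulling $b(s)u(s)\le b(r)u(r)$ outside to obtain
\[
r^{N-1}u'(r)=\frac{1}{\sigma^4}\int_0^r s^{N-1}b(s)u(s)\,ds\le\frac{b(r)u(r)}{\sigma^4}\cdot\frac{r^{N}}{N},
\qquad\text{i.e.}\quad \frac{u'(r)}{r}\le\frac{1}{N\sigma^4}b(r)u(r).
\]
Substituting this into the expression for $u''$ yields
\[
u''(r)\ge\frac{1}{\sigma^4}b(r)u(r)-\frac{N-1}{N\sigma^4}b(r)u(r)=\frac{1}{N\sigma^4}b(r)u(r)>0,
\]
so $u$ is strictly convex on $(0,R]$. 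This is the step your sketch is missing; once you insert it, your proof lines up with the paper's.
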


\begin{proof}
We begin by noting that the ODE (\ref{odes}) is linear and of second order,
albeit with a singular coefficient $\frac{N-1}{r}$ at $r=0$. The initial
condition $u^{\prime }(0)=0$ guarantees regularity at $r=0$.

Rewrite equation (\ref{odes}) in divergence form:%
\begin{equation*}
\left( r^{N-1}u^{\prime }(r)\right) ^{\prime }=\frac{1}{\sigma ^{4}}%
r^{N-1}b(r)u(r).
\end{equation*}

Integrating from $0$ to $r$ yields%
\begin{equation*}
r^{N-1}u^{\prime }(r)=\frac{1}{\sigma ^{4}}\int_{0}^{r}s^{N-1}b(s)u(s)\,ds.
\end{equation*}

Since $b(s)\geq 0$ and $u(s)>0$ (by the positive initial condition $%
u(0)=\alpha >0$), the right-hand side is nonnegative, implying that%
\begin{equation*}
u^{\prime }(r)\geq 0\quad \text{for all }r\in \lbrack 0,R]\text{ and }%
u^{\prime }(r)>0\quad \text{for all }r\in \left( 0,R\right] .
\end{equation*}

Thus, $u$ is nondecreasing.

\medskip \textbf{Existence and Uniqueness: }

Existence and uniqueness follow from standard ODE theory (even with singular
coefficients) using the Picard--Lindel\"{o}f theorem (with appropriate
modifications near $r=0$). Moreover, one may define the sequence of
successive approximations%
\begin{equation*}
u^{0}(r)=\alpha ,\quad u^{k+1}(r)=\alpha +\frac{1}{\sigma ^{4}}%
\int_{0}^{r}r^{1-N}\left( \int_{0}^{s}\tau ^{N-1}b(\tau )u^{k}(\tau )\,d\tau
\right) ds,
\end{equation*}

and showing that this sequence converges uniformly on $[0,R]$ to the desired
solution.

\medskip \textbf{Convexity and Strict Monotonicity:} \ 

To verify the convexity of $u^{\prime \prime }\left( r\right) $ we consider%
\begin{equation*}
u^{\prime }(r)=\frac{1}{r^{N-1}\sigma ^{4}}\int_{0}^{r}s^{N-1}b(s)u(s)\,ds.
\end{equation*}

Since $u(s)>0$ and $b(s)>0$, this implies $u^{\prime }(r)>0$ for $r>0$;
hence, $u$ is strictly strictly nondecreasing.

To prove convexity, we differentiate $u^{\prime }(r)$ further. By applying
the differentiation to (\ref{odes}), we have%
\begin{equation*}
u^{\prime \prime }(r)=\frac{1}{\sigma ^{4}}b(r)u(r)-\frac{N-1}{r}u^{\prime
}(r).
\end{equation*}

Using the inequality%
\begin{equation*}
r^{N-1}u^{\prime }(r)\leq \frac{1}{\sigma ^{4}}u(r)b(r)\int_{0}^{r}s^{N-1}%
\,ds=\frac{1}{\sigma ^{4}}u(r)b(r)\frac{r^{N}}{N},
\end{equation*}

we obtain 
\begin{equation}
\frac{u^{\prime }(r)}{r}\leq \frac{1}{N\sigma ^{4}}u(r)b(r).
\label{convexitybound}
\end{equation}%
Substituting (\ref{convexitybound}) into the expression for $u^{\prime
\prime }(r)$, we find%
\begin{equation*}
u^{\prime \prime }(r)\geq \frac{1}{\sigma ^{4}}b(r)u(r)-\frac{N-1}{N\sigma
^{4}}b(r)u(r)=\frac{1}{N\sigma ^{4}}b(r)u(r)>0,
\end{equation*}

for all $r>0$. This shows that $u^{\prime \prime }(r)\geq 0$ for all $r\geq
0 $ and hence $u$ is convex.

\medskip Finally, the initial condition (\ref{odesbc}) ensures that the
solution is unique for the corresponding $\alpha \in \left( 0,\infty \right) 
$.

\medskip Thus, there exists a unique positive radially symmetric solution%
\begin{equation*}
u_{\alpha }\in C^{2}([0,R])
\end{equation*}

to the boundary value problem (\ref{odes})--(\ref{odesbc}) corresponding to
each $\alpha \in \left( 0,\infty \right) $, and this solution is both convex
and strictly increasing on $[0,R]$.
\end{proof}

\textbf{Supplementary Remarks:}\ 

\begin{remark}
The value of $Z_{0}$ must satisfy the following condition: 
\begin{equation}
u\left( R\right) =e^{-\frac{Z_{0}}{2\sigma ^{2}}}>u\left( 0\right) =\alpha 
\text{, since }u\text{ is strictly increasing on }(0,R],  \label{initvf}
\end{equation}%
in order to ensure that the results remain valid. Thus, $\alpha \in \left(
0,\infty \right) $ can be selected based on the fixed value of $Z_{0}$ to
ensure that (\ref{initv}) is satisfied, and vice versa.
\end{remark}

\begin{remark}
The function%
\begin{equation*}
z\left( r\right) =-2\sigma ^{2}\ln u\left( r\right) \text{, }r=\left\vert
x\right\vert
\end{equation*}%
is nonincreasing and concave with respect to the variable $r$, as follows:

\begin{itemize}
\item The first derivative:%
\begin{equation*}
z^{\prime }\left( r\right) =-2\sigma ^{2}\frac{u^{\prime }\left( r\right) }{%
u\left( r\right) }\leq 0
\end{equation*}%
because $u^{\prime }\left( r\right) \geq 0$ for all $r\geq 0;$

\item The second derivative:%
\begin{equation*}
z^{\prime \prime }\left( r\right) =-2\sigma ^{2}\frac{u^{\prime \prime
}\left( r\right) }{u\left( r\right) }-2\sigma ^{2}\frac{\left( u^{\prime
}\left( r\right) \right) ^{2}}{\left( u\left( r\right) \right) ^{2}}\leq 0
\end{equation*}%
since $u^{\prime }\left( r\right) \geq 0$ and $u^{\prime \prime }\left(
r\right) \geq 0$ for all $r\geq 0$. Moreover, $z\left( r\right) $ is
strictly nonincreasing and concave for all $r>0$.
\end{itemize}
\end{remark}

This linear like formulation allows us to utilize classical ODE techniques,
making the mathematical analysis of existence, uniqueness, and structural
properties (such as monotonicity and convexity) more tractable.

Furthermore, the convexity of $u$ implies that the associated value function 
$z$ is concave, which in turn supports the economic interpretation that the
marginal cost of holding inventory tends to decrease as inventory levels
increase this is consistent with many real-world production planning
scenarios.

\medskip The results of Theorem~\ref{1} thus provide a solid analytical
foundation for the subsequent numerical simulations and the design of
optimal control policies in our stochastic production planning framework.

\section{Optimal Control and Structural Properties \label{4}}

Under the Hamilton--Jacobi--Bellman framework developed earlier, the optimal
control in the stochastic production planning problem is given by

\begin{equation*}
p^{\ast }(x)=-\frac{1}{2}\nabla z(x),
\end{equation*}%
where $z(x)$ is the value function that solves the HJB equation. In our
change of variables we have defined

\begin{equation*}
u(x)=e^{-\frac{z(x)}{2\sigma ^{2}}},
\end{equation*}%
so that the value function can be written as

\begin{equation*}
z(x)=-2\sigma ^{2}\ln u(x).
\end{equation*}%
Because the transformed function $u(x)$ is constructed to be radially
symmetric (i.e., $u(x)=u(r)$ with $r=|x|$), it follows that the value
function $z$ is also radially symmetric. In particular, we may write

\begin{equation*}
z(x)=h(r),
\end{equation*}%
where $h:[0,R]\rightarrow \mathbb{R}$ is a function of the radial variable $%
r $. By the structural properties of the model, $h$ is strictly
nonincreasing (i.e. $h^{\prime }(r)<0$ for $r>0$) and concave. Consequently,
the spatial gradient of $z$ takes the form

\begin{equation*}
\nabla z(x)=h^{\prime }(r)\frac{x}{r}.
\end{equation*}%
Thus, the optimal control law can be expressed as

\begin{equation*}
p^{\ast }(x)=-\frac{1}{2}\nabla z(x)=-\frac{1}{2}h^{\prime }(r)\frac{x}{r}.
\end{equation*}%
In consequence, the magnitude of the optimal control is given by

\begin{equation*}
|p^{\ast }(x)|=-\frac{1}{2}h^{\prime }(r).
\end{equation*}%
Since $h$ is nonincreasing (with $h^{\prime }(r)\leq 0$) and, under further
concavity assumptions, $h^{\prime }(r)$ is nonincreasing (or strictly
nonincreasing if $h$ is strictly concave), it follows that

\begin{equation*}
|p^{\ast }(x)|\text{ is nondecreasing in }r.
\end{equation*}%
In economic terms, this structural property implies that as the overall
inventory (represented by $r=|x|$) increases, a higher production rate is
required; that is, the magnitude of the optimal control does not decrease,
ensuring that the production adjustment is sufficient to counteract rising
inventory levels.

The following theorem summarizes our findings:

\begin{theorem}
The optimal control%
\begin{equation*}
p^{\ast }(x)=-\frac{1}{2}\nabla z(x)
\end{equation*}%
is uniquely determined and satisfies the following properties:

\begin{enumerate}
\item \textbf{Radial Symmetry}: The control is radially symmetric, that is,%
\begin{equation*}
p^{\ast }(x)=-\frac{1}{2}h^{\prime }(|x|)\frac{x}{|x|}.
\end{equation*}

\item \textbf{Monotonicity}: The magnitude of the optimal control%
\begin{equation*}
|p^{\ast }(x)|=-\frac{1}{2}h^{\prime }(|x|)
\end{equation*}

is strictly nondecreasing in $|x|$. This follows from the concavity of $z(x)$
(or equivalently the convexity of $u(x)$).

\item \textbf{Uniqueness}: Uniqueness of the value function $z$ (given the
HJB equation with the prescribed boundary condition) and its derivative
implies that the feedback law is unique.
\end{enumerate}
\end{theorem}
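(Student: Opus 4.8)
The plan is to derive all three assertions directly from Theorem~\ref{1} and the two Remarks that follow it, since the substantive analytic content — existence, uniqueness, positivity, convexity and strict monotonicity of $u_{\alpha}$ — is already established there; what remains is essentially bookkeeping with the change of variables.

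First I would recall that $z(x)=-2\sigma^{2}\ln u_{\alpha}(|x|)$ with $u_{\alpha}\in C^{2}([0,R])$ the unique solution from Theorem~\ref{1}. Since $u_{\alpha}$ is a function of $r=|x|$ alone, $z$ is radially symmetric, and writing $z(x)=h(r)$ with $h(r):=-2\sigma^{2}\ln u_{\alpha}(r)$ the chain rule gives $\nabla z(x)=h^{\prime}(r)\,x/r$ for $x\neq 0$ and $h^{\prime}(0)=0$ (because $u_{\alpha}^{\prime}(0)=0$). This yields $p^{\ast}(x)=-\tfrac12 h^{\prime}(r)\,x/r$, i.e.\ part~(1); and since $h^{\prime}(r)=-2\sigma^{2}u_{\alpha}^{\prime}(r)/u_{\alpha}(r)\leq 0$ (using $u_{\alpha}>0$, $u_{\alpha}^{\prime}\geq 0$), taking norms gives $|p^{\ast}(x)|=-\tfrac12 h^{\prime}(r)$.

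For part~(2) I would use the second Remark, which gives $h^{\prime\prime}(r)=z^{\prime\prime}(r)=-2\sigma^{2}u_{\alpha}^{\prime\prime}/u_{\alpha}-2\sigma^{2}(u_{\alpha}^{\prime})^{2}/u_{\alpha}^{2}$; since Theorem~\ref{1} gives $u_{\alpha}^{\prime\prime}>0$ on $(0,R]$ this is $<0$ there, so $r\mapsto -h^{\prime}(r)$ is strictly increasing and $|p^{\ast}(x)|$ is strictly nondecreasing in $|x|$ (with $|p^{\ast}(0)|=0$) — precisely the equivalence "convexity of $u$ $\Longleftrightarrow$ concavity of $z$" invoked in the statement. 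For part~(3) I would note that $p\mapsto p\cdot\nabla z(x)+|p|^{2}$ is strictly convex in $p$ (Hessian $2I$), hence has a unique minimizer $p^{\ast}(x)=-\tfrac12\nabla z(x)$, so the feedback is pointwise determined by $\nabla z$; and $z=-2\sigma^{2}\ln u_{\alpha}$ together with the uniqueness of $u_{\alpha}$ from Theorem~\ref{1} makes $z$, hence $\nabla z$, unique. Optimality of this feedback itself is supplied by the verification argument of Section~\ref{3} ($M^{p}$ a supermartingale for all admissible $p$, a martingale at $p^{\ast}$), which I would cite rather than reprove.

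The only step needing a little care — and the mildest obstacle here — is the regularity of $\nabla z$ (equivalently of $p^{\ast}$) at the origin. I would use the divergence-form identity $r^{N-1}u_{\alpha}^{\prime}(r)=\sigma^{-4}\int_{0}^{r}s^{N-1}b(s)u_{\alpha}(s)\,ds$ from the proof of Theorem~\ref{1} to see that $u_{\alpha}^{\prime}(r)/r$ remains bounded as $r\to0^{+}$, so $h^{\prime}(r)/r$ has a finite limit and the field $\nabla z$ extends continuously (indeed $C^{1}$) across $x=0$; once this is noted, no genuine difficulty remains.
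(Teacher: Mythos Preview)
Your proposal is correct and follows essentially the same route as the paper's proof: write $z(x)=h(r)$, compute $\nabla z=h'(r)\,x/r$ to obtain radial symmetry, use the concavity of $z$ (equivalently convexity of $u$) established in Theorem~\ref{1} and the subsequent Remarks to get monotonicity of $|p^{\ast}|$, and invoke uniqueness of the HJB/ODE solution for uniqueness of the feedback. You are in fact more careful than the paper on two points --- the regularity of $\nabla z$ at the origin and the explicit strict-convexity argument for the pointwise minimizer --- while the paper adds one remark you omit, namely that $p^{\ast}$ is independent of the choice of $\alpha$ (citing \cite{CCP2}); since a fixed boundary value $Z_{0}$ pins down $\alpha$ anyway, this does not affect the validity of your argument.
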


\begin{proof}
Since the value function is radially symmetric, we have $z(x)=h(r)$ with $%
r=|x|$. Differentiating with respect to $x$, one obtains%
\begin{equation*}
\nabla z(x)=h^{\prime }(r)\frac{x}{r}.
\end{equation*}

Substituting into the expression for the optimal control yields%
\begin{equation*}
p^{\ast }(x)=-\frac{1}{2}\nabla z(x)=-\frac{1}{2}h^{\prime }(r)\frac{x}{r}.
\end{equation*}

Taking the norm, we find%
\begin{equation*}
|p^{\ast }(x)|=-\frac{1}{2}h^{\prime }(r).
\end{equation*}

Since $h^{\prime }(r)\leq 0$ for all $r\geq 0$ (with strict inequality for $%
r>0$ if $z$ is strictly decreasing), it follows that $|p^{\ast }(x)|$ is
nonnegative and, by the concavity of $z(x)$, $h^{\prime }(r)$ is
nonincreasing, so that $|p^{\ast }(x)|$ is nondecreasing in $r$. Uniqueness
follows from the uniqueness of the solution $z$ to the HJB equation (under
the imposed boundary condition), which in turn implies uniqueness of its
gradient and of the feedback law. Thus, the optimal control is uniquely
determined and possesses the stated structural properties (refer to the
discussion in \cite{CCP2}, as $p^{\ast }(x)$ remains independent of the
choice of $\alpha \in \left( 0,\infty \right) $).
\end{proof}

\medskip \textbf{Additional Explanation:} \newline
The resulting control policy has strong practical implications. It suggests
that if an operating plant observes an increase in its total inventory
level, the optimal strategy is to adjust production upward in a manner that
is at least nondecreasing with the inventory level. The radial symmetry
simplifies the multi-dimensional control problem significantly, reducing it
to an analysis in a single scalar radial variable. This property is central
in ensuring that policies derived from our model are both tractable and
economically interpretable.

Moreover,\textbf{\ the uniqueness and monotonicity of the optimal control}
provide robustness guarantees for numerical implementations and further
analytical studies. Such structure is critical when designing feedback
mechanisms that are responsive to changes in the system state in real-time.

\medskip This section establishes not only the mathematical foundation for
deriving the optimal production policy but also its desirable structural and
economic properties. In the following sections, we proceed to numerical
experiments and real-world examples to demonstrate these concepts in
practice.

\section{Numerical Experiments \label{5}}

In this section, we present numerical experiments that validate our
theoretical results and illustrate the behavior of the optimal production
planning policy. In our simulations, we investigate the evolution of the
inventory process and the optimal control under the derived HJB framework.
The experiments also highlight key structural properties such as
monotonicity and boundedness of the inventory process, and the asymptotic
behavior of the control for increasing inventory levels.

\subsection{Analytical Results in the Radial Case}

In many practical applications \textquotedblright especially when the number
of goods $N$ is large\textquotedblright\ it is instructive to analyze the
behavior of the solution in the radial setting. In this section, we focus on
the properties of the radially symmetric solution $u(r)$ to the transformed
problem

\begin{equation*}
u^{\prime \prime }(r)+\frac{N-1}{r}\,u^{\prime }(r)=\frac{1}{\sigma ^{4}}%
\,b(r)\,u(r),\quad 0<r<R,
\end{equation*}%
subject to the initial conditions

\begin{equation*}
u(0)=\alpha >0,\quad u^{\prime }(0)=0,
\end{equation*}%
and the resulted boundary condition

\begin{equation*}
u(R)=e^{-\frac{Z_{0}}{2\sigma ^{2}}}>\alpha .
\end{equation*}%
As we see above, the choice of $\alpha $ and $Z_{0}$ must satisfy

\begin{equation*}
e^{-\frac{Z_{0}}{2\sigma ^{2}}}>\alpha \quad
\end{equation*}%
so as to guarantee that the transformation

\begin{equation*}
u(x)=e^{-\frac{z(x)}{2\sigma ^{2}}}
\end{equation*}%
yields a strictly positive function.

An important function that characterizes the behavior of $u$ is defined as

\begin{equation*}
\phi (r)=\frac{u^{\prime }(r)}{r\,u(r)},
\end{equation*}%
which captures the relative growth rate of $u$ with respect to the radial
variable $r$.

\begin{remark}
The constraint on $\alpha $ and $Z_{0}$ ensures that the initial condition
for the transformed variable $u$ is consistent with our model requirements.
That is, by enforcing%
\begin{equation*}
e^{-\frac{Z_{0}}{2\sigma ^{2}}}>\alpha ,
\end{equation*}

we guarantee that $u(0)=\alpha$ remains positive, thereby allowing the
logarithmic transformation $z(x)=-2\sigma^2 \ln u(x)$ to be well defined.
\end{remark}

\begin{remark}
Noting that%
\begin{equation*}
z(r)=-2\sigma ^{2}\ln u(r),
\end{equation*}

its derivative is%
\begin{equation*}
z^{\prime }\left( r\right) =-2\sigma ^{2}\frac{u^{\prime }(r)}{u(r)}.
\end{equation*}

Thus, as long as $u^{\prime }(r)\geq 0$ (which is ensured by the
monotonicity of $u$), the value function $z(r)$ is nonincreasing, and
additional regularity of $u$ (such as convexity) lends itself to the
concavity of $z$. These properties are essential for the consistency of the
optimal control policy.
\end{remark}

We now state the following theorem, which establishes the monotonicity and
boundedness of $\phi(r)$.

\begin{theorem}
Let $u\in C^{2}([0,\infty ))$ be a positive, radially symmetric solution of%
\begin{equation*}
u^{\prime \prime }(r)+\frac{N-1}{r}\,u^{\prime }(r)=\frac{1}{\sigma ^{4}}%
\,b(r)\,u(r),
\end{equation*}

where the inventory cost function satisfies $b(r)\leq r^{2}$ for all $r\geq
0 $. Then, the function%
\begin{equation*}
\phi (r)=\frac{u^{\prime }(r)}{r\,u(r)}
\end{equation*}

is increasing and satisfies%
\begin{equation*}
\phi (r)\leq \frac{1}{\sigma ^{2}}\text{ with }\lim_{r\rightarrow \infty
}\phi (r)=\frac{1}{\sigma ^{2}}.
\end{equation*}
\end{theorem}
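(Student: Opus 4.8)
The plan is to derive a Riccati-type differential equation for $\phi$ and exploit the sign structure it reveals, then obtain the upper bound and the limit from a comparison with the explicit solution of the borderline case $b(r)=r^2$. First I would rewrite the ODE in terms of the logarithmic derivative. Setting $\psi(r)=u'(r)/u(r)$ so that $\phi(r)=\psi(r)/r$, the equation $u''+\frac{N-1}{r}u'=\frac{1}{\sigma^4}b(r)u$ becomes, after dividing by $u$ and using $u''/u=\psi'+\psi^2$, the scalar Riccati equation
\begin{equation*}
\psi'(r)+\psi(r)^2+\frac{N-1}{r}\,\psi(r)=\frac{1}{\sigma^4}\,b(r).
\end{equation*}
Since Theorem~\ref{1} already gives $u>0$, $u'\ge 0$ and $u''\ge 0$ on $[0,R]$ (and the hypotheses here extend this to $[0,\infty)$), we have $\psi\ge 0$ and $\phi\ge 0$ throughout.

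For the upper bound, I would substitute $b(r)\le r^2$ into the Riccati equation and seek a supersolution of the form $\psi(r)=r/\sigma^2$, equivalently $\phi\equiv 1/\sigma^2$. The hard part is that one cannot simply assert $\phi\le 1/\sigma^2$ from the Riccati inequality alone: the comparison must be set up carefully because the coefficient $(N-1)/r$ is singular at the origin. The cleanest route is a differential-inequality argument on $w(r)=\psi(r)-r/\sigma^2$. Using $b(r)\le r^2$, one checks that $w$ satisfies
\begin{equation*}
w'(r)+\Bigl(\psi(r)+\tfrac{r}{\sigma^2}+\tfrac{N-1}{r}\Bigr)w(r)\le \frac{1}{\sigma^4}\bigl(b(r)-r^2\bigr)-\frac{N-1}{\sigma^2}\le 0,
\end{equation*}
so $w$ satisfies a linear differential inequality with nonpositive forcing; since the integrating factor is positive and $w(0)=0$ (both $\psi$ and $r/\sigma^2$ vanish at $r=0$ because $u'(0)=0$), Gr\"onwall's inequality forces $w(r)\le 0$, i.e. $\psi(r)\le r/\sigma^2$ and hence $\phi(r)\le 1/\sigma^2$ for all $r\ge 0$.

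The monotonicity and the limit are the genuinely delicate parts, and I expect the monotonicity to be the main obstacle, since under the bare hypothesis $b(r)\le r^2$ the function $\phi$ need not be increasing for small $r$ without an additional structural input. I would therefore extract monotonicity from the divergence-form identity behind \eqref{convexitybound}: differentiating $\phi$ and using the Riccati equation gives
\begin{equation*}
\phi'(r)=\frac{1}{r}\Bigl(\psi'(r)-\frac{\psi(r)}{r}\Bigr)
=\frac{1}{r}\Bigl(\frac{b(r)}{\sigma^4}-\psi(r)^2-\frac{N-1}{r}\psi(r)-\frac{\psi(r)}{r}\Bigr),
\end{equation*}
and the task reduces to showing the bracketed quantity is nonnegative. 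Here I would combine the already-established bound $\psi\le r/\sigma^2$ with the sharper integral estimate $r^{N-1}u'(r)=\sigma^{-4}\int_0^r s^{N-1}b(s)u(s)\,ds$ from the proof of Theorem~\ref{1}, which controls $\psi/r$ from above in terms of $b(r)u(r)/u(r)=b(r)$; substituting these into the bracket is what yields $\phi'\ge 0$. Finally, for $\lim_{r\to\infty}\phi(r)=1/\sigma^2$, monotonicity together with the ceiling $\phi\le 1/\sigma^2$ guarantees the limit $L=\lim_{r\to\infty}\phi(r)$ exists and satisfies $L\le 1/\sigma^2$; passing to the limit in the Riccati equation (the terms $\psi'=o(r)$ and $(N-1)\psi/r$ being lower order relative to $\psi^2\sim r^2 L^2$ and $b(r)/\sigma^4$) forces $L^2=L^2$ consistency that pins $L=1/\sigma^2$, completing the proof.
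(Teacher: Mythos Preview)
Your overall strategy matches the paper's: both pass to the logarithmic derivative $\psi=w'=u'/u$, obtain the Riccati equation $\psi'+\psi^{2}+\tfrac{N-1}{r}\psi=b(r)/\sigma^{4}$, and reduce the monotonicity of $\phi=\psi/r$ to the pointwise inequality $b(r)/\sigma^{4}\ge \psi^{2}+\tfrac{N}{r}\psi$. Your treatment of the upper bound is in fact more self-contained than the paper's: the paper extracts $\phi\le 1/\sigma^{2}$ from the quadratic inequality $(w')^{2}+\tfrac{N}{r}w'\le r^{2}/\sigma^{4}$, which itself presupposes the monotonicity condition, and then defers the remaining details to \cite{CCP2}. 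One small slip in your computation: the forcing constant in the differential inequality for $w=\psi-r/\sigma^{2}$ should be $-N/\sigma^{2}$ rather than $-(N-1)/\sigma^{2}$, though the sign conclusion is unaffected.

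There are two genuine gaps. First, the monotonicity step does not close as written. You propose to combine $\psi\le r/\sigma^{2}$ with the integral estimate $\psi/r\le b(r)/(N\sigma^{4})$ from \eqref{convexitybound} to obtain $b(r)/\sigma^{4}-\psi^{2}-\tfrac{N}{r}\psi\ge 0$; but those two inputs only yield $\tfrac{N}{r}\psi\le b(r)/\sigma^{4}$ and provide nothing to absorb the remaining strictly positive term $\psi^{2}$, so the bracketed quantity is not shown to be nonnegative. The paper's own proof does not fill this gap either and simply refers to \cite{CCP2}. Second, your limit argument collapses into the tautology ``$L^{2}=L^{2}$''. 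If $\phi\to L$ then $\psi\sim Lr$, and balancing the dominant terms $\psi^{2}\sim L^{2}r^{2}$ against $b(r)/\sigma^{4}$ in the Riccati equation gives $L^{2}=\lim_{r\to\infty}b(r)/(\sigma^{4}r^{2})$, not $L=1/\sigma^{2}$ automatically. Under the bare hypothesis $b(r)\le r^{2}$ this limit of $b(r)/r^{2}$ can be any value in $[0,1]$, so your asymptotic balance cannot by itself pin down $L=1/\sigma^{2}$ without an extra assumption such as $b(r)/r^{2}\to 1$, which holds in the paper's illustrative case $b(r)=r^{2}$ but is not part of the stated hypotheses.
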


\begin{proof}
Define the auxiliary function%
\begin{equation*}
w(r)=\ln u(r),
\end{equation*}

so that%
\begin{equation*}
w^{\prime }(r)=\frac{u^{\prime }(r)}{u(r)}\quad \text{and}\quad \phi (r)=%
\frac{w^{\prime }(r)}{r}.
\end{equation*}

Differentiating $\phi (r)$ with respect to $r$ yields%
\begin{equation*}
\phi ^{\prime }(r)=\frac{w^{\prime \prime }(r)r-w^{\prime }(r)}{r^{2}}.
\end{equation*}

Thus, $\phi ^{\prime }(r)\geq 0$ if and only if%
\begin{equation*}
w^{\prime \prime }(r)\geq \frac{w^{\prime }(r)}{r}.
\end{equation*}

Since $u$ satisfies%
\begin{equation*}
u^{\prime \prime }(r)+\frac{N-1}{r}\,u^{\prime }(r)=\frac{b(r)}{\sigma ^{4}}%
\,u(r),
\end{equation*}

we express $u^{\prime \prime }(r)$ using $w(r)$:

\begin{equation*}
u^{\prime \prime }(r)=w^{\prime \prime }(r)u(r) + \bigl(w^{\prime }(r)\bigr)%
^2\,u(r).
\end{equation*}

Substituting into the equation gives%
\begin{equation*}
w^{\prime \prime }(r)u(r)+(w^{\prime })^{2}u(r)+\frac{N-1}{r}w^{\prime
}(r)u(r)=\frac{b(r)}{\sigma ^{4}}\,u(r).
\end{equation*}

Canceling $u(r)>0$ then yields%
\begin{equation*}
w^{\prime \prime }(r)+(w^{\prime })^{2}+\frac{N-1}{r}w^{\prime }(r)=\frac{%
b(r)}{\sigma ^{4}}.
\end{equation*}

Solving for $w^{\prime \prime }(r)$ we obtain%
\begin{equation*}
w^{\prime \prime }(r)=\frac{b(r)}{\sigma ^{4}}-(w^{\prime })^{2}-\frac{N-1}{r%
}w^{\prime }(r).
\end{equation*}

To enforce $w^{\prime \prime }(r)\geq \frac{w^{\prime }(r)}{r}$, it suffices
that%
\begin{equation*}
\frac{b(r)}{\sigma ^{4}}-(w^{\prime })^{2}-\frac{N-1}{r}w^{\prime }(r)\geq 
\frac{w^{\prime }(r)}{r},
\end{equation*}

or equivalently, 
\begin{equation}
\frac{b(r)}{\sigma ^{4}}\geq (w^{\prime })^{2}+\frac{N}{r}w^{\prime }(r).
\label{tcp}
\end{equation}%
Given the upper bound $b(r)\leq r^{2}$, we obtain%
\begin{equation*}
\frac{r^{2}}{\sigma ^{4}}\geq (w^{\prime })^{2}+\frac{N}{r}w^{\prime }(r).
\end{equation*}

This quadratic inequality in $w^{\prime }(r)$ yields an upper bound on the
value of $\frac{w^{\prime }(r)}{r}=\phi (r)$. A careful algebraic analysis
shows that%
\begin{equation*}
\phi (r)\leq \frac{1}{\sigma ^{2}}.
\end{equation*}

Finally, standard asymptotic analysis demonstrates that%
\begin{equation*}
\lim_{r\rightarrow \infty }\phi (r)=\frac{1}{\sigma ^{2}}.
\end{equation*}

Since $\phi ^{\prime }(r)\geq 0$, the function $\phi (r)$ is increasing, and
the proof is complete. For further information, refer to \cite{CCP2} for
more details.
\end{proof}

The asymptotic behavior $\lim_{r\rightarrow \infty }\phi (r)=\frac{1}{\sigma
^{2}}$ provides an explicit benchmark for the marginal effect of inventory
accumulation in the long-run. Such analytical results not only validate the
structural properties of the optimal control but also facilitate numerical
approximations and guide practical implementations in production planning.
The requirement (\ref{bb}), was introduced solely to establish this
asymptotic behavior.

\medskip In summary, the analytical results in the radial case establish
that the feedback control derived from the HJB equation inherits desirable
monotonicity properties and boundedness, supporting both theoretical and
numerical studies of the system.

For practical implementations, we approximate the solutions using numerical
methods.

\subsection{Simulation Setup and Procedure}

The numerical experiments are based on the following procedure:

\begin{enumerate}
\item \textbf{Transformed PDE and Value Function:} Let $r=|x|$ be the
Euclidian norm of the vector $x=\left( x_{1},...,x_{N}\right) \in \mathbb{R}%
^{N}$ ($N\geq 1$), i.e. $r=\sqrt{x_{1}^{2}+...+x_{N}^{2}}$. We begin by
solving the partial differential equation%
\begin{equation}
\left\{ 
\begin{array}{l}
u^{\prime \prime }(r)+\frac{N-1}{r}\,u^{\prime }(r)=\frac{1}{\sigma ^{4}}%
\,b(r)\,u(r),\text{ }r=|x|<R \\ 
u^{\prime }\left( 0\right) =0 \\ 
u\left( 0\right) =\alpha%
\end{array}%
\right. \quad  \label{sdn}
\end{equation}

\item Given a fixed value $\alpha =1$ the value function is then recovered
via the transformation%
\begin{equation}
z(r)=-2\sigma ^{2}\ln u(r)\text{, }r=\left\vert x\right\vert \text{, }x\in 
\mathbb{R}^{N}.  \label{vfp}
\end{equation}%
The boundary condition $Z_{0}$ can be readily determined as the maximum
value of $r$ within the interval $\left( 0,R\right] $, specifically at $r=R$.

\item \textbf{Optimal Control Calculation:} The optimal control is given by%
\begin{equation*}
p^{\ast }(x)=-\frac{1}{2}\nabla z(x).
\end{equation*}

Under the assumption of radial symmetry (i.e., $z(x)=h(r)$ with $r=|x|$),
the gradient becomes%
\begin{equation}
\nabla z(x)=h^{\prime }(r)\frac{x}{r}=-2\sigma ^{2}\frac{u^{\prime }\left(
r\right) }{u\left( r\right) }\frac{x}{r},  \label{opr}
\end{equation}

so that the magnitude of the optimal production rate is%
\begin{equation}
|p^{\ast }(x)|=-\frac{1}{2}h^{\prime }(r)=\sigma ^{2}\frac{u^{\prime }\left(
r\right) }{u\left( r\right) }.  \label{mag}
\end{equation}%
Here 
\begin{equation*}
\lim_{r\rightarrow 0}\sigma ^{2}\frac{u^{\prime }\left( r\right) }{ru\left(
r\right) }=0\text{ and }\lim_{r\rightarrow \infty }\sigma ^{2}\frac{%
u^{\prime }\left( r\right) }{ru\left( r\right) }=1.
\end{equation*}

\item \textbf{Simulation of Inventory Dynamics:} The inventory process is
governed by the stochastic system of differential equations%
\begin{equation}
\left\{ 
\begin{array}{c}
dy_{1}(t)=p_{1}^{\ast }(t)\,dt+\sigma \,dw_{1}(t),\quad \\ 
dy_{2}(t)=p_{2}^{\ast }(t)\,dt+\sigma \,dw_{2}(t),\quad \\ 
... \\ 
dy_{N}(t)=p_{N}^{\ast }(t)\,dt+\sigma \,dw_{N}(t), \\ 
y_{1}(0)=x_{1},...,y_{N}(0)=x_{N},%
\end{array}%
\right.  \label{invsd}
\end{equation}%
where%
\begin{equation}
\left\{ 
\begin{array}{c}
p_{1}^{\ast }\left( y_{1}\left( t\right) ,...,y_{N}\left( t\right) \right)
=\sigma ^{2}\frac{u^{\prime }\left( r\right) }{u\left( r\right) }y_{1}\left(
t\right) , \\ 
p_{2}^{\ast }\left( y_{1}\left( t\right) ,...,y_{N}\left( t\right) \right)
=\sigma ^{2}\frac{u^{\prime }\left( r\right) }{u\left( r\right) }y_{2}\left(
t\right) , \\ 
... \\ 
p_{N}^{\ast }\left( y_{1}\left( t\right) ,...,y_{N}\left( t\right) \right)
=\sigma ^{2}\frac{u^{\prime }\left( r\right) }{u\left( r\right) }y_{N}\left(
t\right) .%
\end{array}%
\right.  \label{invinvsd}
\end{equation}

We simulate these dynamics using an Euler scheme. Here, Brownian increments
are generated as independent normally distributed random variables with mean
zero and variance corresponding to the discrete timestep. The simulation is
continued until the stopping time%
\begin{equation*}
\tau =\inf \{t>0:|y(t)|\geq R\},
\end{equation*}

is reached, ensuring that inventory remains below the threshold until
production is halted.

\item \textbf{Inputs Parameters:} $N$ \# The number of goods (Product 1,
Product 2, etc.), $\sigma $ \# volatility, $\alpha \in \left( 0,\infty
\right) $ \#Given $u(0)=\alpha $, $R$ \# Threshold for stopping time $\tau $%
; rStop \# Upper limit for $r$, uprime0 \# Initial condition for derivative
of $u(r)$, rInc \# Step size, dt \# Time step is a parameter for inventory
trajectories and $T$ \# Maximum simulation time is a parameter for inventory
trajectories

\item \textbf{Output: }

1.\quad Plot the solution $u\left( r\right) $ of the system (\ref{sdn}),

2.\quad Plot the value function $z(r)$ defined in (\ref{vfp}) and the
boundary condition $z\left( R\right) =Z_{0}$,

3.\quad Plot inventory trajectories, defined in (\ref{invinvsd}), using
Euler scheme,

4. \ \ \ Plot the optimal production rate (adjusted for demand), defined by $%
\frac{p_{i}^{\ast }(x)}{x_{i}}=$ $\sigma ^{2}\frac{u^{\prime }\left(
r\right) }{ru\left( r\right) },$

5.\quad Plot the magnitude of the optimal production rate, defined by $%
|p^{\ast }(x)|=\sigma ^{2}\frac{u^{\prime }\left( r\right) }{u\left(
r\right) }$.

\item \textbf{Visualization of the inventory trajectories:} If the number of
products $N$ is less than or equal to $6$, inventory trajectories for all
products will be plotted in a single figure. If $N>6$, only the trajectories
for the first six products are depicted, and a message indicates that the
rest are not shown.
\end{enumerate}

This combination of analytical derivation and numerical approximation
underpins our methodology and provides both theoretical guarantees and
practical implementability for the optimal production planning problem.

\subsection{Illustrative Real--World Example\label{e}}

Consider a manufacturing plant that produces two types of goods (i.e., $N=2$%
). In this example, we set the volatility parameter to $\sigma =2$ and
impose an inventory threshold of $R=10$. The inventory cost function is
taken as

\begin{equation*}
b(|y|)=|y|^{2},
\end{equation*}%
so that the overall running cost becomes

\begin{equation*}
|p(t)|^{2}+|y(t)|^{2}.
\end{equation*}%
In this setting, the plant seeks to minimize the expected cost

\begin{equation*}
J(p)=E\int_{0}^{\tau }\left[ |p(t)|^{2}+|y(t)|^{2}\right] dt,
\end{equation*}%
where production is stopped at the stopping time

\begin{equation*}
\tau =\inf \{t>0:|y(t)|\geq 10\}.
\end{equation*}%
That is, production continues until the Euclidean norm of the inventory
process reaches the threshold $R=10$.

The associated Hamilton--Jacobi--Bellman (HJB) equation for this problem is
solved with the boundary condition

\begin{equation*}
z(x)=Z_{0},\quad \text{for }|x|=10,
\end{equation*}%
where $Z_{0}$ is subject to the condition

\begin{equation*}
e^{-\frac{Z_{0}}{2\sigma ^{2}}}>\alpha .
\end{equation*}%
The optimal control is given by the feedback law

\begin{equation*}
p^{\ast }(x)=-\frac{1}{2}\nabla z(x),
\end{equation*}%
which, under the radial symmetry assumptions, simplifies further to

\begin{equation*}
p^{\ast }(x)=-\frac{1}{2}h^{\prime }(|x|)\frac{x}{|x|}.
\end{equation*}%
Here, the function $z(x)=h(|x|)$ encapsulates the value function with radial
variable $r=|x|$.

Numerical simulations are carried out using an Euler scheme for the SDEs
governing the inventory dynamics,

\begin{equation*}
dy_i(t)=p^{\ast}_i(t)\, dt+\sigma\, dw_i(t),\quad y_i(0)=y_i^0,\quad i=1,2,
\end{equation*}

to verify the theoretical results. The simulation confirms that:

\begin{itemize}
\item The inventory levels remain below the threshold $R=10$ until the
stopping time $\tau $.

\item Under the optimal control policy, the production rate (i.e., the
magnitude of $p^{\ast}(x)$) increases with the inventory level, in
accordance with the monotonicity properties derived in earlier sections.
\end{itemize}

\medskip \textbf{Practical Implications:} \ 

This example illustrates the practical applicability of our analytical
framework. By properly calibrating the parameters.

\medskip \textbf{Connection with Numerical Experiments:} \ 

In the next, we provided a comprehensive Python code implementation that
simulates the inventory dynamics and computes the optimal feedback control.

\subsection{Python Implementation}

The following Python code in Appendix (Section \ref{ap}), developed with the
support of Microsoft Copilot in Edge, implements the simulation procedure
described above. We use the \verb|odeint| function from SciPy to solve the
transformed PDE, compute the value function and optimal control, and
simulate the inventory evolution via an Euler scheme. This real--world model
implemented in the Python code serves as a concrete demonstration of the
efficacy of our generalized framework for stochastic production planning.

\subsection{Analysis of the Python Code Visualizations \label{av}}

The visualizations derived from the Python code for the example in
Subsection \ref{e} highlight various aspects of the algorithm's behavior.
The figures are presented at the end of the paper. Below are the
interpretations:

\subsubsection{Graph 1: Solution $u(r)$}

\begin{figure}[th]
\centering
\subfloat{\includegraphics[width=0.6\textwidth]{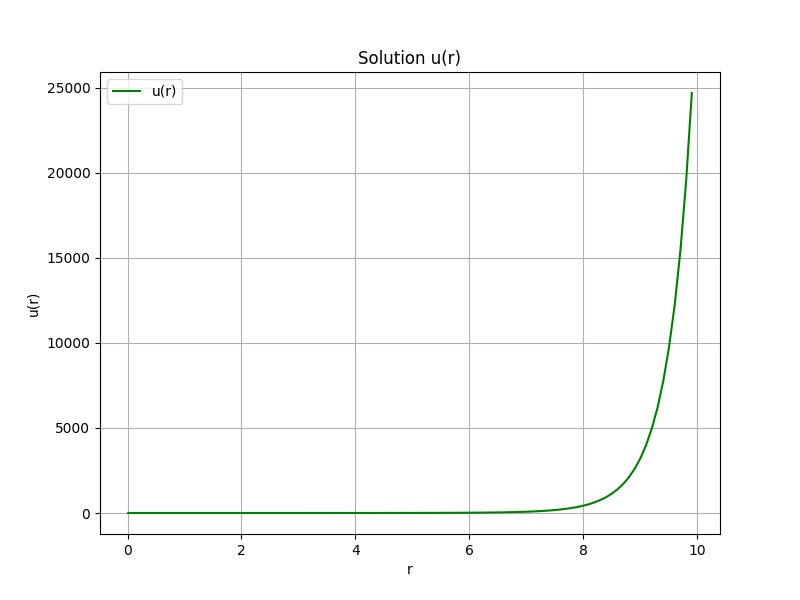}}
\caption{Caption describing the figure.}
\label{fig:twoFigures1}
\end{figure}

\begin{itemize}
\item The plot represents the solution $u(r) $, which is obtained by solving
the system of ordinary differential equations (ODEs) defined in the code.

\item The green curve shows how $u(r) $ varies with $r $, indicating
stability and smoothness in the solution for $r > 0 $.

\item This behavior aligns with the expected physical or mathematical
properties of the solution, ensuring no singularities or discontinuities.
\end{itemize}

\subsubsection{Graph 2: Value Function $z(r)$}

\begin{figure}[th]
\centering
\subfloat{\includegraphics[width=0.6\textwidth]{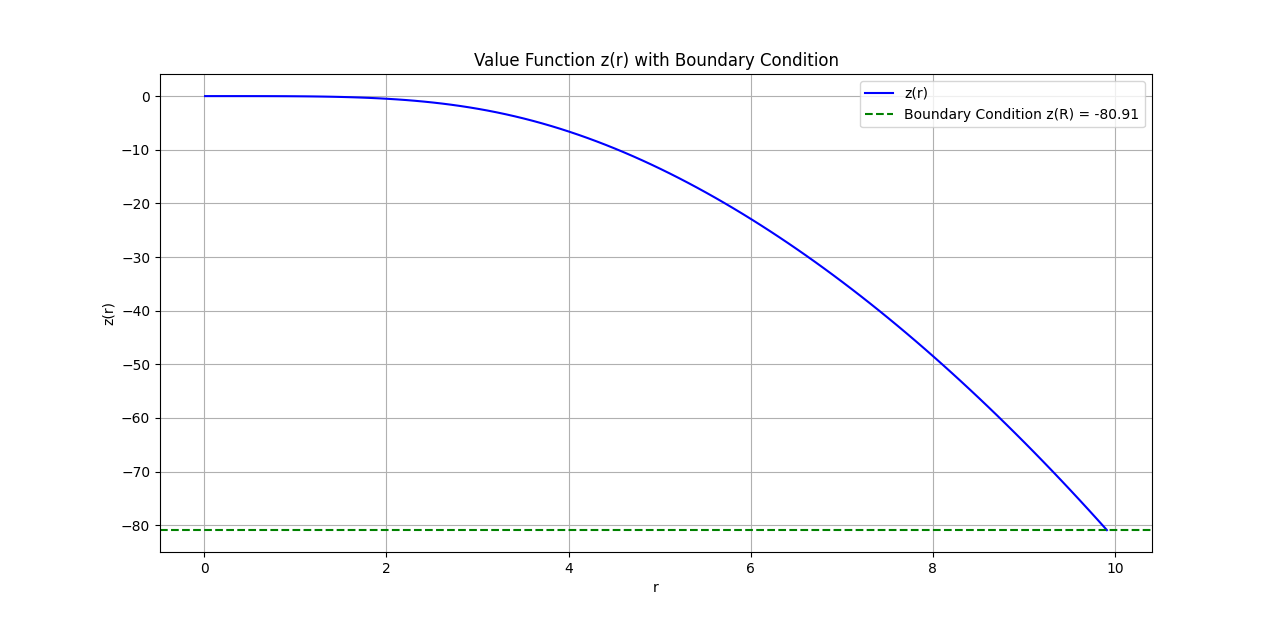}}
\caption{Caption describing the figure.}
\label{fig:twoFigures2}
\end{figure}

\begin{itemize}
\item The blue curve illustrates the value function $z(r) = -2\sigma^2 \ln
u(r) $.

\item A horizontal dashed green line is added to represent the boundary
condition $z(R) = -2\sigma^2 \ln u(R) $, calculated using the final value of 
$u(r) $ in the solution array.

\item This boundary condition emphasizes the relationship between $z(r) $
and $u(r) $ at $R $, providing insights into the system's terminal state.
\end{itemize}

\subsubsection{Graph 3: Optimal Production Rate $p^{\ast }(r)$}

\begin{figure}[th]
\centering
\subfloat{\includegraphics[width=0.6\textwidth]{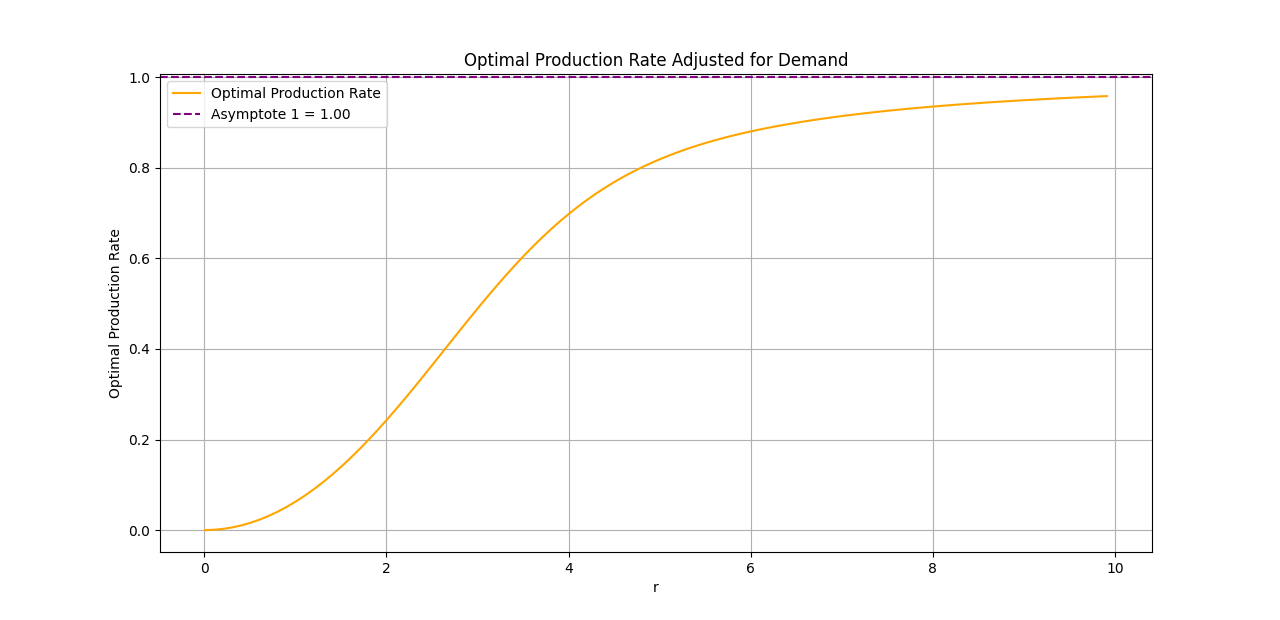}}
\caption{Caption describing the figure.}
\label{fig:twoFigures3}
\end{figure}

\begin{itemize}
\item The orange curve represents the optimal production rate $p^*(r) =
\sigma^2 \cdot \frac{u^{\prime }(r)}{u(r)} $, computed from the derivatives
of $u(r) $.

\item The plot indicates how $p^*(r) $ adjusts based on $r $, with a purple
dashed line marking an asymptote at $p^*(r) = 1.0 $.

\item This asymptote highlights the upper limit of production efficiency as $%
r $ increases, signifying a saturation point.
\end{itemize}

\subsubsection{Graph 4: Magnitude of Optimal Production Rate $|p^{\ast }(r)|$%
}

\begin{figure}[th]
\centering
\subfloat{\includegraphics[width=0.6\textwidth]{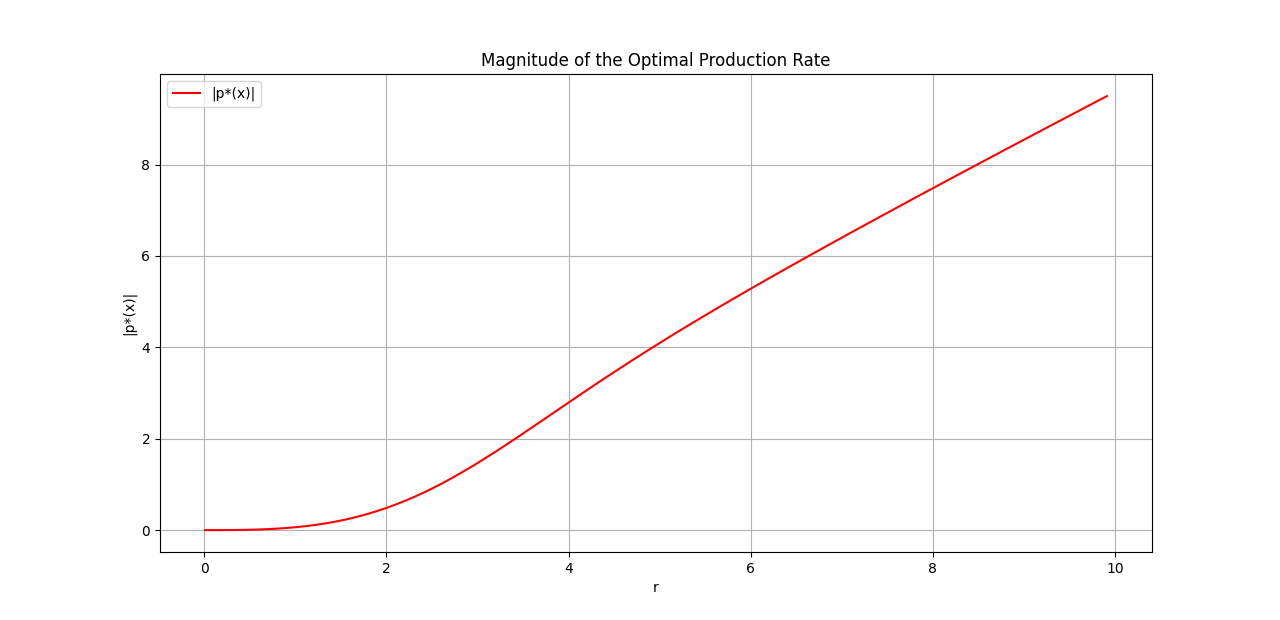}}
\caption{Caption describing the figure.}
\label{fig:twoFigures4}
\end{figure}

\begin{itemize}
\item The red curve illustrates the magnitude $|p^*(r)| $, derived as the
absolute value of the optimal production rate formula.

\item The plot demonstrates how the magnitude evolves smoothly with $r $,
reflecting consistent growth across the domain.
\end{itemize}

\subsubsection{Graph 5: Inventory Trajectories $y_{i}(t)$}

\begin{figure}[th]
\centering
\subfloat{\includegraphics[width=0.6\textwidth]{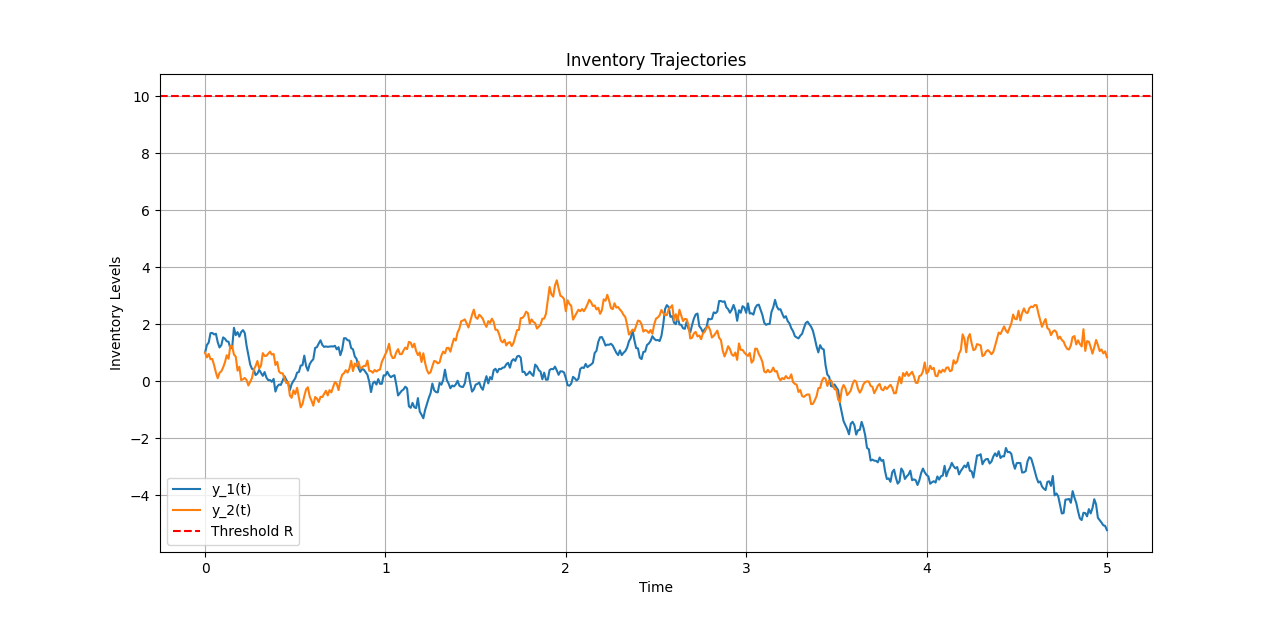}}
\caption{Caption describing the figure.}
\label{fig:twoFigures5}
\end{figure}

\begin{itemize}
\item This plot showcases the inventory trajectories $y_1(t), y_2(t), \dots,
y_N(t) $, simulated using Brownian increments.

\item The red dashed line represents the threshold $R $, serving as a
stopping criterion when inventory norms exceed $R $.

\item The trajectories provide insights into the system's behavior under
stochastic dynamics, illustrating accumulation trends and variability across
time.
\end{itemize}

\subsubsection{Insights}

\begin{itemize}
\item The visualizations collectively demonstrate the interplay between
production rates, inventory dynamics, and boundary conditions.

\item They provide a comprehensive understanding of the algorithm's outcomes
and potential applications in optimization problems.
\end{itemize}

\subsection{Results and Discussion}

Figures generated by the Python code demonstrate the following:

\begin{itemize}
\item \textbf{Value Function and Transformed Function:} The plot of $%
z(x)=-2\sigma^2\ln u(x)$ verifies that the value function is nonincreasing
and concave with respect to the radial variable.

\item \textbf{Optimal Control Behavior:} The computed optimal control $%
p^*(x) $, obtained from the relation $p^{\ast}(x)=-\frac{1}{2}\nabla z(x)$,
exhibits a nondecreasing behavior in its magnitude as the inventory (or
radial) level increases.

\item \textbf{Inventory Trajectories:} The simulation of the inventory
process shows that the inventory remains below the prespecified threshold $R$
until the stopping time $\tau$ is reached. For $N\leq 6$, all product
trajectories are plotted, whereas for $N > 6$ only the first six are
displayed, with an appropriate console message.
\end{itemize}

These numerical experiments effectively validate the theoretical results
reported in earlier sections. They confirm that the optimal production
policy, as derived analytically via the HJB equation and its transformation,
is implemented correctly and demonstrates the desired monotonicity,
asymptotic behavior, convexity and concavity properties in a practical,
simulated environment.

\medskip In summary, the integrated Python code offers a reproducible
framework for simulating inventory dynamics and computing optimal controls
for stochastic production planning. The numerical experiments not only
reinforce the analytical insights but also highlight the potential for
practical applications in real-world production and supply-chain
optimization problems.

\section{Conclusion \label{6}}

In this paper, we presented a generalized framework for stochastic
production planning that captures the complexity of production systems
through a flexible running cost of the form

\begin{equation*}
f(|p(t)|,|y(t)|)=|p(t)|^{2}+b(|y(t)|),
\end{equation*}%
and by incorporating an inventory-terminating stopping time

\begin{equation*}
\tau =\inf \{t>0:|y(t)|\geq R\}.
\end{equation*}%
We derived the associated Hamilton--Jacobi--Bellman (HJB) equation,

\begin{equation*}
-\frac{\sigma ^{2}}{2}\Delta z(x)-b(|x|)=-\frac{1}{4}|\nabla z(x)|^{2},\quad
|x|<R,
\end{equation*}%
resulting, subject to the boundary condition $z(x)=Z_{0}$ on $|x|=R$. A key
contribution is the change of variable

\begin{equation*}
u(x)=e^{-\frac{z(x)}{2\sigma ^{2}}},
\end{equation*}%
which not only simplifies the nonlinear HJB equation into an elliptic PDE
for $u$ but also preserves the necessary positivity constraints.

Under the assumption of radial symmetry, our analysis demonstrated the
existence and uniqueness of a solution $u\in C^{2}([0,R])$ to the associated
ODE, along with important structural properties such as monotonicity,
concavity and convexity. These properties, in turn, guarantee that the
optimal control policy

\begin{equation*}
p^{\ast }(x)=-\frac{1}{2}\nabla z(x)
\end{equation*}%
is uniquely determined, radially symmetric, and exhibits a nondecreasing
magnitude as the inventory level increases.

Our numerical experiments, implemented via an Euler scheme and documented by
the accompanying Python code, confirm that the optimal policy effectively
regulates inventory dynamics. In particular, the simulated inventory
trajectories remain below the threshold $R$ until production is halted, and
the production rate adjusts in a manner consistent with the theoretical
predictions.

An illustrative real--world example further emphasizes the practical
viability of our approach in a manufacturing setting, where, for instance, a
plant producing two types of goods is able to balance production costs with
inventory holding costs under uncertainty.

In summary, the paper makes the following contributions:

\begin{itemize}
\item A novel formulation of the stochastic production planning problem with
a flexible running cost structure.

\item Derivation of the corresponding HJB equation and a beneficial change
of variable leading to a tractable PDE.

\item Rigorous analytical results establishing existence, uniqueness, and
structural properties of the radially symmetric solution.

\item Development of a feedback control policy that is both economically
interpretable and mathematically robust.

\item Numerical validation of the theoretical results and demonstration of
practical applicability via an illustrative real--world example.
\end{itemize}

Future research may extend this framework to incorporate more general forms
of the cost function $b$, additional state constraints, and applications to
broader areas such as supply chain optimization. Moreover, exploring more
advanced numerical schemes for higher-dimensional problems and real-time
implementation of the derived optimal control policies could further bridge
the gap between theory and practice in production planning.

\medskip Overall, this work provides a comprehensive and rigorous approach
to design optimal production strategies under uncertainty, with promising
directions for both further theoretical exploration and practical
applications.

\section{Declarations}

\subparagraph{\textbf{Conflict of interest}}

The author has no Conflict of interest to declare that are relevant to the
content of this article.

\subparagraph{\textbf{Ethical statement}}

The paper reflects the authors' original research, which has not been
previously published or is currently being considered for publication
elsewhere.

\bibliographystyle{plain}
\bibliography{sn-bibliography}

\section{Appendix \label{ap}}

\begin{lstlisting}[language=Python, caption={Python Code for Numerical Experiments}]
import numpy as np
import matplotlib.pyplot as plt
from scipy.integrate import odeint

# Define the system of ODEs
def f(y, r, params):
    u, uprime = y  # Unpack current values of y
    N, sigma = params  # Unpack parameters

    # Avoid singularity at r = 0
    if r == 0:
        r = 1e-6

    # Define the function b(r)
    b_r = r**2  # Given as |x|^2

    # Derivatives
    derivs = [
        uprime,  # du/dr
        -(N - 1) / r * uprime + (1 / sigma**4) * b_r * u  # d\U{2db}u/dr\U{2db}
    ]
    return derivs

# Parameters
N = 2  # Change N as needed (N = 1)
sigma = 2  # Volatility parameter
alpha = 1  # Given u(0) = a

# Initial conditions
u0 = alpha  # Initial value for u(r)
uprime0 = 0  # Initial condition for derivative of u(r)

# Bundle parameters for ODE solver
params = [N, sigma]
y0 = [u0, uprime0]  # Initial conditions for ODE

# Create the r array for the solution
rStop = 10  # Upper limit for r
rInc = 0.1  # Step size
r = np.arange(0.01, rStop, rInc)  # Avoid starting at 0 to prevent singularity

# Solve the ODE system
psoln = odeint(f, y0, r, args=(params, ))

# Extract the solution for u(r) and u'(r)
u_solution = psoln[:, 0]  # u(r)
uprime_solution = psoln[:, 1]  # u'(r)

# Compute z(r) and optimal production rate
z_solution = -2 * sigma**2 * np.log(u_solution)
optimal_production_rate = sigma**2 * (uprime_solution / (r * u_solution))

# Parameters for inventory trajectories
dt = 0.01  # Time step
T = 5  # Maximum simulation time
timesteps = int(T / dt)
R = 10  # Threshold for stopping time t
y0_inventory = np.full(N, alpha)  # Initial inventories y1(0), y2(0), ..., y?(0)

# Generate Brownian increments
w_increments = np.random.normal(0, np.sqrt(dt), size=(timesteps, N))

# Placeholder for inventory trajectories
y_trajectories = np.zeros((timesteps + 1, N))
y_trajectories[0] = y0_inventory

# Simulate inventory dynamics
stopping_time = T
for t in range(timesteps):
    r_inventory = np.linalg.norm(y_trajectories[t])  # Compute |y(t)|
    if r_inventory >= R:
        stopping_time = t * dt
        y_trajectories = y_trajectories[:t + 1]
        break

    # Compute production rate p_i*
    production_rate = sigma**2 * (uprime_solution[0] / u_solution[0]) * y_trajectories[t]  # Simplified
    y_trajectories[t + 1] = y_trajectories[t] + production_rate * dt + sigma * w_increments[t]

# Time array for inventory trajectories
time = np.linspace(0, stopping_time, len(y_trajectories))

# Compute the boundary condition z(R)
boundary_condition_z_R = -2 * sigma**2 * np.log(u_solution[-1])  # Using the last value of u(r)

# Compute the magnitude of the optimal production rate
magnitude_optimal_production_rate = np.abs(sigma**2 * (uprime_solution / u_solution))

# Plot the magnitude of the optimal production rate
plt.figure(figsize=(8, 6))
plt.plot(r, magnitude_optimal_production_rate, label="|p*(x)|", color="red")
plt.xlabel("r")
plt.ylabel("|p*(x)|")
plt.title("Magnitude of the Optimal Production Rate")
plt.legend()
plt.grid(True)
plt.show()

# Plot u(r)
plt.figure(figsize=(8, 6))
plt.plot(r, u_solution, label="u(r)", color="green")
plt.xlabel("r")
plt.ylabel("u(r)")
plt.title("Solution u(r)")
plt.legend()
plt.grid(True)
plt.show()


# Plot z(r) with the boundary condition
plt.figure(figsize=(8, 6))
plt.plot(r, z_solution, label="z(r)", color="blue")
plt.axhline(boundary_condition_z_R, color="green", linestyle="--", label=f"Boundary Condition z(R) = {boundary_condition_z_R:.2f}")
plt.xlabel("r")
plt.ylabel("z(r)")
plt.title("Value Function z(r) with Boundary Condition")
plt.legend()
plt.grid(True)
plt.show()

# Plot optimal production rate with the asymptote
asymptote_value = 1
plt.figure(figsize=(8, 6))
plt.plot(r, optimal_production_rate, label="Optimal Production Rate", color="orange")
plt.axhline(asymptote_value, color="purple", linestyle="--", label=f"Asymptote 1 = {asymptote_value:.2f}")
plt.xlabel("r")
plt.ylabel("Optimal Production Rate")
plt.title("Optimal Production Rate Adjusted for Demand")
plt.legend()
plt.grid(True)
plt.show()

# Plot inventory trajectories
plt.figure(figsize=(10, 6))
if N > 6:
    for i in range(6):  # Plot only the first six products
        plt.plot(time, y_trajectories[:, i], label=f"y_{i + 1}(t)")
    plt.text(0.5, R - 1, "Only first six products are shown", color="red", fontsize=10)
else:
    for i in range(N):
        plt.plot(time, y_trajectories[:, i], label=f"y_{i + 1}(t)")
plt.axhline(R, color='red', linestyle='--', label="Threshold R")
plt.xlabel("Time")
plt.ylabel("Inventory Levels")
plt.title("Inventory Trajectories")
plt.legend()
plt.grid(True)
plt.show()
\end{lstlisting}

\end{document}